\documentclass[10pt,letterpaper,oneside]{amsart}


  \usepackage{amsmath,amssymb,amsthm}
  \usepackage{amscd}
  \usepackage{geometry}
  \usepackage{graphicx,epstopdf}
  \usepackage{color}
  \usepackage{enumerate}
  \usepackage{xspace}
  \usepackage{tipa}
  \usepackage{epsfig}
  \usepackage{pinlabel}
  \usepackage[all]{xy}



  \usepackage{hyperref}










  \newtheorem{theorem}{Theorem}[section]

  \newtheorem{lemma}[theorem]{Lemma}
  
  \newtheorem*{conjecture*}{Conjecture}

  \theoremstyle{definition}

  \newtheorem*{claim*}{Claim}

  \newtheorem*{question*}{Question}
  \newtheorem*{answer*}{Answer}
  \newtheorem*{application*}{Application}

  \theoremstyle{remark}
  
  \newtheorem*{remark*}{Remark}



  \DeclareMathOperator{\Mod}{Mod}



  


  
  

  
  
  
  

  

  
  
  
  
  
  









  \newcommand{\param}{{\mathchoice{\mkern1mu\mbox{\raise2.2pt\hbox{$
  \centerdot$}}
  \mkern1mu}{\mkern1mu\mbox{\raise2.2pt\hbox{$\centerdot$}}\mkern1mu}{
  \mkern1.5mu\centerdot\mkern1.5mu}{\mkern1.5mu\centerdot\mkern1.5mu}}}





\begin{document}


\title {An arc graph distance formula for the flip graph
 }

  \author   {Funda G\"ultepe}
  \author {Christopher J. Leininger}
\address{Department of Mathematics\\
University of Illinois at Urbana-Champaign\\ Urbana, IL 61801}
\email{fgultepe@illinois.edu}
\urladdr{http://www.math.illinois.edu/~fgultepe/}

\address{Department of Mathematics\\
University of Illinois at Urbana-Champaign\\ Urbana, IL 61801}
\email{clein@math.uiuc.edu}
\urladdr{http://www.math.uiuc.edu/~clein/}

\begin{abstract} Using existing technology, we prove a Masur-Minsky style distance formula for flip-graph distance between two triangulations, expressed as a sum of the distances of the projections of these triangulations into arc graphs of the suitable subsurfaces of $S$.
\end{abstract}

\maketitle

\section{introduction}

Let $S$ be a surface with at least one puncture and $\chi(S) < 0$, and write $\mathcal F(S)$ for the {\em flip graph of $S$}.  This is the graph whose vertices are in a one-to-one correspondence with ideal triangulations, and whose edges connect triangulations that differ by a {\em flip}; see \cite{disarlo-parlier} and Figure~\ref{fig:flip1}.  The purpose of this note is to prove the following formula estimating distance in $\mathcal F(S)$.

\begin{theorem} \label{T:main}
Fix $S$, a connected, orientable, finite type, surface of non-positive Euler characteristic, with at least one puncture, and not a pair of pants.
For any $k > 0$ sufficiently large, there exists $K\geq 1, C \geq 0$ so that for any two triangulations $T_1,T_2 \in \mathcal F(S)$ we have
\[ d_{\mathcal F}(T_1,T_2) \stackrel{K,C}{\asymp} \sum_{Y \subseteq S} [d_{\mathcal A(Y)}(T_1,T_2)]_k. \]
\end{theorem}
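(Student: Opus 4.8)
The plan is to reduce the flip graph distance formula to the Masur–Minsky distance formula for the marking graph (or the pants graph / mapping class group, as appropriate), by establishing a quasi-isometry between $\mathcal F(S)$ and a known complex for which the subsurface-projection distance formula is available, and then translating the subsurface-projection terms into arc-graph projection terms. Concretely: an ideal triangulation $T$ determines a complete clean marking (its underlying simplex of curves together with transversals coming from the triangulation), so there is a natural coarse map $\Phi \co \mathcal F(S) \to \mathcal M(S)$ from the flip graph to the marking graph. First I would show $\Phi$ is coarsely Lipschitz: a single flip changes $T$ in a bounded region, hence changes the associated marking by a bounded amount in the marking graph. The reverse inequality — that $\Phi$ is a quasi-isometric embedding with quasi-dense image — is the first technical heart of the argument; here one uses that triangulations carrying a fixed pants decomposition are organized like the twist/transversal data of a marking, and that elementary moves on markings (twists and flips of transversals) are realized by bounded-length paths of flips in $\mathcal F(S)$. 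This kind of comparison is essentially present in the work of Disarlo–Parlier and in the Masur–Minsky machinery, so I would cite and adapt rather than reprove it.

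Granting the quasi-isometry $\mathcal F(S) \emul \mathcal M(S)$, the Masur–Minsky distance formula gives
\[ d_{\mathcal F}(T_1,T_2) \stackrel{K,C}{\asymp} \sum_{Y \subseteq S} [d_{\mathcal C(Y)}(\mu_1,\mu_2)]_k, \]
where $\mu_i = \Phi(T_i)$, the sum ranges over essential (non-pants) subsurfaces $Y$, and $\mathcal C(Y)$ is the curve complex of $Y$ (with annular complexes for annuli). The second main step is to replace, term by term and up to additive error, the curve complex projection distance $d_{\mathcal C(Y)}(\mu_1,\mu_2)$ by the arc graph projection distance $d_{\mathcal A(Y)}(T_1,T_2)$. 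For a non-annular $Y$ this rests on the standard fact that the arc-and-curve complex $\mathcal{AC}(Y)$ is quasi-isometric to $\mathcal C(Y)$ (Korkmaz–Papadopoulos), together with a comparison $\mathcal A(Y) \emul \mathcal{AC}(Y)$ when $Y$ has punctures, and the observation that the subsurface projection of a triangulation $T_i$ — which genuinely contains arcs of $Y$, not just curves — differs by a uniformly bounded amount from the projection of the curve-and-transversal data of the marking $\mu_i$. For annular $Y$ the arc graph is the annular complex itself, so there is nothing to do beyond matching definitions of projection. Assembling these bounded per-term comparisons, and using that only boundedly many terms near the threshold are affected (so the thresholded sums change by a controlled multiplicative-additive amount when $k$ is enlarged), yields the claimed formula.

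The main obstacle I anticipate is the careful bookkeeping in the quasi-isometric embedding $\mathcal F(S) \hookrightarrow \mathcal M(S)$ — specifically, showing that the transversal/twisting information encoded by a triangulation is coarsely the same as that of the associated marking, uniformly over all of $\mathcal F(S)$, and that paths in the marking graph lift to paths of flips of comparable length. A subtlety here is that a triangulation does not literally remember a pants decomposition in a canonical way (one must choose which curves in the triangulation to promote, or pass to a subsurface-by-subsurface analysis), so some care is needed to make the correspondence coarsely well-defined. Once that is in place, the passage from curve complexes to arc graphs is routine given the cited quasi-isometries, and the threshold manipulation is the standard Masur–Minsky argument. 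I would therefore structure the proof as: (i) define $\Phi$ and prove the two-sided distance comparison with $\mathcal M(S)$; (ii) invoke the Masur–Minsky formula; (iii) prove the per-subsurface comparison $d_{\mathcal C(Y)}(\mu_1,\mu_2) \eadd d_{\mathcal A(Y)}(T_1,T_2)$; (iv) conclude by absorbing errors into the constants $K,C$ for $k$ large.
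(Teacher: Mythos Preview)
Your steps (i), (ii), (iv) match the paper (which obtains the quasi-isometry $\mathcal F(S)\to\mathcal M(S)$ more cheaply via Milnor--\v{S}varc, using only that $\Mod(S)$ acts cocompactly on both spaces, rather than by explicit path-lifting). The genuine gap is step (iii): the claimed comparison $\mathcal A(Y)\emul\mathcal{AC}(Y)$ is \emph{false} in general. While the inclusion of the curve graph into $\mathcal{AC}(Y)$ is a quasi-isometry, the inclusion of the arc graph is not. For instance, let $Y$ be a genus-one surface with two punctures, let $\gamma$ be a nonseparating curve with complementary four-holed sphere $X$, and let $\phi$ be a pseudo-Anosov supported on $X$. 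For any arc $\alpha\subset X$, both $\alpha$ and $\phi^n\alpha$ are disjoint from $\gamma$, so $d_{\mathcal C(Y)}(\alpha,\phi^n\alpha)$ stays bounded; yet $d_{\mathcal A(Y)}(\alpha,\phi^n\alpha)\to\infty$, since $X$ is a hole for $\mathcal A(Y)$ and $d_{\mathcal C(X)}(\alpha,\phi^n\alpha)\to\infty$. Hence the per-subsurface additive comparison $d_{\mathcal C(Y)}(\mu_1,\mu_2)\eadd d_{\mathcal A(Y)}(T_1,T_2)$ you need cannot hold.

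The paper repairs this by invoking the Masur--Schleimer arc-graph distance formula, which expresses each $d_{\mathcal A(Y)}(T_1,T_2)$ as a thresholded sum of $d_{\mathcal C(X)}(T_1,T_2)$ over the \emph{holes} $X$ of $\mathcal A(Y)$ (subsurfaces $X\subseteq Y$ containing every puncture of $Y$). Summing over all $Y$ yields a double sum; the key combinatorial input (Lemma~\ref{L:where it's a hole}) is that any fixed $X$ is a hole for at most $2^{\xi(S)}$ surfaces $Y$, so the double sum collapses, up to a bounded multiplicative factor, to the Masur--Minsky sum $\sum_X [d_{\mathcal C(X)}(T_1,T_2)]_k$. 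The remaining work is threshold bookkeeping with three cut-offs $k_1<k_2<k_3$ to make the various thresholded sums line up.
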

The distances on the right are {\em arc graph} distances in subsurfaces, $[x]_k$ is the cut-off function giving value $x$ if $x \geq k$ and $0$ otherwise, and $x \stackrel{K,C}{\asymp} y$ is shorthand for the condition $\frac{1}{K}(x - C) \leq y \leq Kx + C$.   See the next section for a precise statement.

Our theorem follows more-or-less directly from the Masur-Minsky distance formula \cite{MM2} and the Masur-Schleimer distance formula \cite{MSch1}, but seems worth making explicit since $\mathcal F(S)$ is an important, particularly tractable, geometric model for the mapping class group of $S$ (see e.g.~\cite{harer-stab,harer-virt,Hatcher-triangulation,moshAut,disarlo-parlier,bell}), while on the other side, the geometry of the arc graph has been greatly simplified in \cite{HPW}.
Various distance formulas \cite{MM1,MSch1,Rafi2} have been used extensively to understand the geometry of mapping class group, Teichm\"uller space, and homomorphisms  (see e.g.\cite{brock,Behr,kentleininger,BowBundle,BDS,BBML,CLM,tao,EMR,BBF1}) and have motivated research in related areas (see e.g. \cite{SS,CP2,ST2,sisto,kimkoberda,BestSF,ST1,HamHen,vogt}).

It would be interesting to find a proof of Theorem~\ref{T:main} that does not appeal to the previous distance formulas.



\bigskip

\noindent
{\bf Acknowledgements.}  The authors thank Valentina Disarlo, Hugo Parlier, and Kasra Rafi for useful conversations.  The second author was partially supported by NSF grant DMS 1510034.

\section{The proof.}

For a surface $S$ of genus $g$ with $n$ punctures, we write $\xi(S) = 3g-3+n$ (we do not distinguish between a puncture and a hole, and will only refer to punctures to avoid confusion later).  All surfaces we consider are orientable, have at least one puncture, and have $\xi >0$, with one exception: we allow annuli (which have $\xi = -1$).  In particular, we exclude three-punctured spheres in all of what follows.  Arcs, curves, multiarcs, and multicurves are assumed essential and are considered up to isotopy.  Multiarcs and multicurves have pairwise non-isotopic components.  Ideal triangulations are multiarcs with a maximal number of components.  Markings are complete clean markings (see \cite{MM2}).

We write $\mathcal C(Y)$ for the arc-and-curve graph of a surface $Y$, which is quasi-isometric to the curve graph (more precisely, the inclusion of the curve graph into the arc-and-curve graph is a quasi-isometry).   Given any multiarc, multicurve, marking, or triangulation, $\alpha$ on a surface $S$ and subsurface $Y \subseteq S$ which is not an annulus, we let $\pi_Y(\alpha)$ denote the arc-and-curve projection:  This is the union of the isotopy classes of arcs and curves of intersection of $\alpha$ with $Y$ (assuming they are in minimal position).  For $Y$ an annulus, we use the usual projection to $\mathcal A(Y)$ via the cover corresponding to $Y$; see \cite{MM2} for details.    We will write
\[ d_{\mathcal C(Y)}(\alpha,\beta) =  diam( \pi_Y(\alpha) \cup \pi_Y(\beta))\]
where the diameter is taken in $\mathcal C(Y)$.    When the projections are non-empty, for example if $\alpha$ is a marking or a triangulation, then $d_{\mathcal C(Y)}$ satisfies a triangle inequality.  If $\alpha$ is an arc or a triangulation, then $\pi_Y(\alpha)$ is in the arc graph, $\mathcal A(Y)$, and so we can define $d_{\mathcal A(Y)}(\alpha,\beta)$ similarly.  We note that using the arc-and-curve graph projection, it follows that for any $X \subseteq Y \subseteq S$, we have $\pi_X \circ \pi_Y = \pi_X$, unless $X$ is an annulus.

As stated in the introduction, the flip graph $\mathcal F(S)$ is the graph whose vertex set is the set isotopy classes of (ideal) triangulations.
Two vertices in the graph share an edge if they are related by a {\em flip}, in other words, if they differ at most by an arc; see  \cite{disarlo-parlier} and Figure \ref{fig:flip1}.

\begin{figure}
\centering{\includegraphics[width=0.6\textwidth]{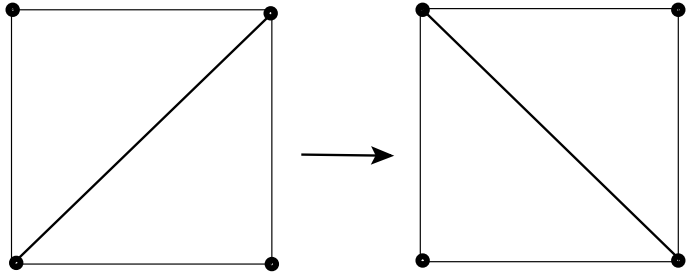}}
\caption{An example of a flip in the flip graph}
\label{fig:flip1}
\end{figure}

For markings $\mu_1,\mu_2$ on $S$, we let $d_{\mathcal M}(\mu_1,\mu_2)$ denote the distance in the marking graph $\mathcal M(S)$; see \cite{MM2}.  The first distance formula we will need is due to Masur and Minsky:
\begin{theorem}[\cite{MM2}] \label{T:mm-dist} Fix $S$, a connected, orientable surface with $\xi(S) > 0$.  For any $k > 0$ sufficiently large, there exists $K,C \geq 1$ so that for any two markings $\mu_1,\mu_2$ we have
\[ d_{\mathcal M}(\mu_1,\mu_2) \stackrel{K,C}{\asymp} \sum_{Y \subseteq S} [d_{\mathcal C(Y)}(\mu_1,\mu_2)]_k.\]
\end{theorem}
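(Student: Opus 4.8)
This is the Masur--Minsky distance formula for the marking graph, and proving it amounts to reconstructing the ``hierarchy'' machinery of \cite{MM2}; below I sketch that route. Write $\Sigma_k(\mu_1,\mu_2) = \sum_{Y\subseteq S}[d_{\mathcal C(Y)}(\mu_1,\mu_2)]_k$ for the right-hand side (the sum ranges over isotopy classes of essential, non--pair-of-pants subsurfaces of $S$, including annuli). There are two inequalities to prove with uniform constants, $\Sigma_k(\mu_1,\mu_2) \lesssim d_{\mathcal M}(\mu_1,\mu_2)$ and $d_{\mathcal M}(\mu_1,\mu_2) \lesssim \Sigma_k(\mu_1,\mu_2)$, and I take them in turn. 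Since the arc-and-curve graph is quasi-isometric to the curve graph we may work with whichever is convenient, and throughout we use the hyperbolicity of these graphs (for which \cite{HPW} now provides a short proof) and the Bounded Geodesic Image Theorem.

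\textbf{Lower bound on distance.}
First I would record the basic coarse-Lipschitz property of subsurface projection: if $\mu \leftrightarrow \mu'$ differ by a single elementary move of $\mathcal M(S)$ (a twist or a flip), then $d_{\mathcal C(Y)}(\mu,\mu') \le A$ for every $Y$, for a uniform $A$; this holds because $\mu$ and $\mu'$ share all but one of their constituent curves, a single curve projects to a set of diameter at most $2$, and a Dehn twist is supported on an annulus. Next I would control overcounting with a Behrstock-type inequality: for overlapping subsurfaces $Y,Z$ and any marking $\mu$, $\min\big(d_{\mathcal C(Y)}(\partial Z,\mu),\, d_{\mathcal C(Z)}(\partial Y,\mu)\big)$ is uniformly bounded. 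This lets one time-order the subsurfaces $Y$ with $d_{\mathcal C(Y)}(\mu_1,\mu_2)\ge k$ along any path $\mu_1=\nu_0,\dots,\nu_n=\mu_2$ and argue that the path must ``traverse'' these in order, so that $n$ is at least a definite fraction of $\sum_Y d_{\mathcal C(Y)}(\mu_1,\mu_2)$ once $k$ exceeds the relevant thresholds; combined with the Lipschitz bound this gives $\Sigma_k(\mu_1,\mu_2)\le A'\,d_{\mathcal M}(\mu_1,\mu_2)$. (Alternatively one follows \cite{MM2} and extracts this inequality from the hierarchy construction below.)

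\textbf{Upper bound on distance.}
For the reverse inequality I would build an explicit efficient path from $\mu_1$ to $\mu_2$ via hierarchies. The steps: (i) develop \emph{tight geodesics} in $\mathcal C(Y)$ --- geodesics whose interior vertices are determined, as multicurves, by the subsurfaces filled by adjacent vertices --- and show any two vertices are joined by one, with suitable local finiteness; (ii) construct a \emph{hierarchy} $H$ from $\mu_1$ to $\mu_2$: a family $\{g_Y\}$ of tight geodesics with a main geodesic in $\mathcal C(S)$ from $\pi_S(\mu_1)$ to $\pi_S(\mu_2)$, each $g_Y$ arising because $Y$ is a complementary component (or an annulus about a vertex) of a geodesic higher up, with endpoints governed by the initial and terminal markings along that geodesic; (iii) prove the \emph{Large Links Theorem}: every $Y$ with $d_{\mathcal C(Y)}(\mu_1,\mu_2)$ above a uniform threshold supports a geodesic $g_Y\in H$, and then $|g_Y|\eadd d_{\mathcal C(Y)}(\mu_1,\mu_2)$; and (iv) \emph{resolve} $H$ into a path in $\mathcal M(S)$ whose length is comparable, up to uniform multiplicative and additive constants, to $\sum_{Y\in H}|g_Y|$, by sweeping through the hierarchy one elementary move at a time. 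Combining (iii) and (iv) produces a path of length $\lesssim \sum_{Y\in H}|g_Y| \lesssim \Sigma_k(\mu_1,\mu_2) + (\text{uniformly bounded remainder})$ once $k$ is chosen above all thresholds; absorbing the remainder into $C$ gives $d_{\mathcal M}(\mu_1,\mu_2)\le K\Sigma_k(\mu_1,\mu_2)+C$.

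\textbf{Main obstacle.}
The technical heart is the hierarchy machinery of Steps (ii)--(iii): proving tight geodesics behave well (existence, local finiteness), that hierarchies connect arbitrary markings, and especially the structural ``no backtracking'' and ``descent'' properties together with the Large Links Theorem that make the resolution in Step (iv) both well-defined and of the right length. All of this rests on hyperbolicity of the curve (equivalently arc-and-curve) graphs and on the Bounded Geodesic Image Theorem, and reorganizing it so that the estimates close up uniformly is the real work. An alternative would be to verify the axioms of a hierarchically hyperbolic space, or to invoke a coarse-median structure on $\mathcal M(S)$, but each of these ultimately re-proves the same curve-complex input.
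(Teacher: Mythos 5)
This statement is not proved in the paper at all: it is imported verbatim from Masur--Minsky \cite{MM2}, and the only thing the authors add is the remark that the constants $K,C$ may be taken to depend monotonically on the threshold $k$. Your sketch is a faithful roadmap of the original argument in the cited source --- elementary-move Lipschitz bounds and time-ordering of large-projection subsurfaces for the lower bound, and tight geodesics, hierarchies, the Large Links Theorem, and resolution into a marking path for the upper bound --- so in that sense you are taking ``the same approach as the paper,'' namely the approach of the reference it quotes. Two caveats: as written your proposal is an outline, not a proof, since the entire technical content (existence and finiteness of tight geodesics, the structural no-backtracking and descent properties, Large Links, and the resolution estimate) is named but deferred, which is fine only because the theorem is legitimately a citation here; and your use of a Behrstock-type inequality for the lower bound is a modern substitute rather than what \cite{MM2} actually does (there the lower bound also comes out of the hierarchy machinery), though it is a valid alternative route. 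If you wanted your write-up to serve the role the theorem plays in this paper, the one point worth making explicit is the monotonic dependence of $(K,C)$ on $k$, since the proof of Theorem~\ref{T:main} invokes exactly that monotonicity when it passes to the threshold $k_3+2\delta_0$.
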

In this theorem, we note that $K,C$ can be chosen to depend monotonically on $k$.  Indeed, the right-hand side becomes less efficient at estimating the left-hand side as $k$ increases, so at least coarsely, this monotonicity is necessary.

There is a distance formula for arc graphs due to Masur and Schleimer (see Lemma 7.2 and Theorems 5.10 and 13.1 of \cite{MSch1}).  To state this formula, we recall that given a surface $Y$, a {\em hole} for $\mathcal A(Y)$ is an essential subsurface $X \subseteq Y$ such that the punctures of $Y$ are also punctures of $X$, which we write as $\partial Y \subseteq \partial X$.  We let $H(\mathcal A(Y))$ denote the set of holes for $\mathcal A(Y)$.  For $Y$ an annulus, the only hole for $\mathcal A(Y)$ is $Y$, and $Y$ is not a hole for $\mathcal A(X)$, for any other surface $X$.

\begin{theorem}[\cite{MSch1}] \label{T:msch-dist} Fix $S$, a connected, orientable surface with at least one puncture and $\xi(S) > 0$.
Then for any $k > 0$ sufficiently large, there exists $K \geq 1, C \geq 0$ so that for any two arcs $\alpha_1,\alpha_2$,
\[ d_{\mathcal A(S)}(\alpha_1,\alpha_2) \stackrel{K,C}{\asymp} \sum_{X \in H(\mathcal A(S))} [d_{\mathcal C(X)}(\alpha_1,\alpha_2)]_k .\]
\end{theorem}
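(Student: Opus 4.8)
Theorem~\ref{T:msch-dist} is proved in \cite{MSch1}: it is the combination of Lemma~7.2 and Theorems~5.10 and~13.1 of \cite{MSch1}, so in the body of this note we simply quote it; for orientation we sketch the shape of the argument one would give. The plan is to prove separately the two inequalities packaged into $\stackrel{K,C}{\asymp}$: the ``coarse Lipschitz'' lower bound $\sum_{X} [d_{\mathcal C(X)}(\alpha_1,\alpha_2)]_k \leq K\, d_{\mathcal A(S)}(\alpha_1,\alpha_2) + C$, which is routine, and the upper bound, which is not.

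For the lower bound I would first record that for every hole $X \in H(\mathcal A(S))$ the projection $\pi_X \co \mathcal A(S) \to \mathcal C(X)$ (arc-and-curve projection, followed by the annular projection when $X$ is an annulus) is coarsely Lipschitz: crossing an edge of $\mathcal A(S)$ exchanges one arc for a disjoint one, which changes the arcs and curves of intersection with $X$ by a bounded amount. I would then prove a bounded geodesic image theorem for $\mathcal A(S)$: there is a constant $M$ so that if $d_{\mathcal C(X)}(\alpha_1,\alpha_2) > M$ then every $\mathcal A(S)$-geodesic from $\alpha_1$ to $\alpha_2$ contains a vertex disjoint from a witness for $X$, so geodesics that avoid the relevant region have bounded $\pi_X$-image. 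With these in hand the usual counting argument applies: fix an $\mathcal A(S)$-geodesic $g$ from $\alpha_1$ to $\alpha_2$, assign to each contributing hole $X$ a maximal subsegment $g_X$ of $g$ along which $\pi_X$ stays near $\pi_X(\alpha_1)$, invoke a Behrstock-type inequality for the family $\{\pi_X\}$ to conclude that only boundedly many of the $g_X$ contain any given point of $g$ (so $\sum_X |g_X| \leq K'|g|$), and note that $d_{\mathcal C(X)}(\alpha_1,\alpha_2) \leq K''|g_X| + K''$; summing over $X$ gives the bound.

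For the upper bound I must, given arcs $\alpha_1,\alpha_2$, build a path in $\mathcal A(S)$ of length at most $K\big(\sum_X [d_{\mathcal C(X)}(\alpha_1,\alpha_2)]_k\big) + C$, and I would do this by induction on the complexity $\xi(S)$. The base cases --- the once-punctured torus, the four-punctured sphere, and the annulus --- have $S$ as their only hole and have $\mathcal A(S)$ quasi-isometric to (or equal to) $\mathcal C(S)$, so the formula is immediate. For the inductive step the model to imitate is the resolution of a Masur--Minsky hierarchy: take a geodesic in the arc-and-curve graph $\mathcal C(S)$ from $\alpha_1$ to $\alpha_2$, interpolate between its consecutive (disjoint) vertices by short surgery sequences in $\mathcal A(S)$, and wherever the projection $\pi_X$ to a proper hole $X \subsetneq S$ is large, splice in a subpath realizing the corresponding geodesic of $\mathcal C(X)$, handling the proper sub-holes of $X$ --- which are again holes for $\mathcal A(S)$ --- by the inductive hypothesis; the recursion terminates because $\xi(X) < \xi(S)$. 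One cannot resolve the $\mathcal C(S)$-geodesic with bounded cost per step, since that would force $\mathcal A(S)$ to be quasi-isometric to $\mathcal C(S)$, which fails precisely because of the proper holes; so the surgery sequences and the spliced-in detours must be chosen compatibly across the levels of the recursion, and controlling this is the main point.

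The hard part, as always with distance formulas, is making the construction \emph{efficient}: one must organize the recursion so that the resulting path does not backtrack, i.e.\ so that the terms $[d_{\mathcal C(X)}(\alpha_1,\alpha_2)]_k$ are charged to essentially disjoint subpaths and the total length is comparable to --- rather than, say, quadratic in --- their sum. This is exactly what the structural input of \cite{MSch1} --- a partial order on holes, the ``clean'' (controlled) intersection patterns of their witnesses, and a Behrstock-type inequality for the projections $\{\pi_X\}$ --- is designed to supply; verifying these properties for $\mathcal A(S)$ and feeding them into the general distance estimate is the substance of that paper, and reproving it from scratch would be the real obstacle to an independent proof.
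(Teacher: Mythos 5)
Your proposal is correct and matches the paper's treatment: Theorem~\ref{T:msch-dist} is not proved here but is quoted from \cite{MSch1} (Lemma~7.2 together with Theorems~5.10 and~13.1), exactly as you do. Your accompanying sketch of the lower bound (Lipschitz projections, bounded geodesic image, Behrstock-type counting) and the upper bound (hierarchy-style induction over holes) is a fair description of the machinery in \cite{MSch1}, but the paper itself adds nothing beyond the citation.
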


The proof of Theorem~\ref{T:main} also requires the following elementary observation.
\begin{lemma} \label{L:where it's a hole}
Fix a surface $S$.  For any essential subsurface $X \subseteq S$, there are at most $2^{\xi(X)} \leq 2^{\xi(S)}$ subsurfaces $Y$ such that $X$ is a hole for $\mathcal A(Y)$.
\end{lemma}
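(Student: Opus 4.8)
The plan is to parametrize the surfaces $Y$ for which $X$ is a hole of $\mathcal{A}(Y)$ by sub-multicurves of $\partial X$, using that such a $Y$ is pinned down by its boundary. Recall that $X$ being a hole of $\mathcal{A}(Y)$ means $X \subseteq Y$ is essential and $\partial Y \subseteq \partial X$. If $X$ is an annulus then, by the stated convention, the only such $Y$ is $X$ itself, so there is exactly one; and $\xi(X) \le \xi(S)$ since a pants decomposition of $X$ extends to one of $S$. So assume from now on that $X$ is not an annulus.

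The crux is an elementary rigidity statement: \emph{if $Y_1$ and $Y_2$ are essential subsurfaces of $S$, each containing $X$, and $\partial Y_1 = \partial Y_2$, then $Y_1 = Y_2$.} Indeed, set $m = \partial Y_1 = \partial Y_2$. Since $X \subseteq Y_i$, the interior of $X$ is disjoint from $m$, so $X$ lies in a single component $Z$ of the surface obtained by cutting $S$ along $m$. On the other hand each $Y_i$ is connected with boundary exactly $m$, so cutting $S$ along $m$ leaves $Y_i$ uncut and with empty frontier in the component that contains it; hence $Y_i$ equals that component, namely $Z$. Thus $Y_1 = Z = Y_2$.

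Consequently $Y \mapsto \partial Y$ is injective on the set of $Y$ for which $X$ is a hole of $\mathcal{A}(Y)$, and for each such $Y$ the multicurve $\partial Y$ is one of the $2^{|\partial X|}$ sub-multicurves of $\partial X$. Hence there are at most $2^{|\partial X|}$ such surfaces. Since a maximal multicurve in $S$ — a pants decomposition — has $\xi(S)$ components, we get $|\partial X| \le \xi(S)$, so the count is at most $2^{\xi(S)}$. This uniform bound is already all that the proof of Theorem~\ref{T:main} requires.

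Sharpening $2^{\xi(S)}$ to the claimed $2^{\xi(X)}$ amounts to observing that not every sub-multicurve of $\partial X$ occurs as $\partial Y$: if $c \in \partial X$ is absent from $\partial Y$, then the complementary component of $X$ in $S$ meeting $c$ gets absorbed into $Y$, and since every puncture of $Y$ must already be a puncture of $X$, every absorbed component is puncture-free, hence of negative Euler characteristic. Grouping the complement of $X$ into the connected ``branches'' hanging off $X$ and bounding, by an Euler-characteristic accounting, the branches that can be absorbed should control the exponent by $\xi(X)$. This last bookkeeping for the complement is the step I would expect to be the main obstacle; the bound $2^{\xi(S)}$ above, by contrast, is essentially formal.
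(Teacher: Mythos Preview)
Your argument is the same as the paper's: parametrize the admissible $Y$ by sub-multicurves $\partial Y \subseteq \partial X$, observe that $Y$ must be the component of $S \setminus \partial Y$ containing $X$ so that this parametrization is injective, and bound the number of sub-multicurves by $2^{|\partial X|}$. Your rigidity paragraph merely spells out what the paper states in one sentence.

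The only divergence is at the final inequality. The paper dispatches the sharper bound in one clause, asserting $|\partial_0 X| \leq \xi(X)$, whereas you stop at $|\partial X| \leq \xi(S)$ and flag the sharpening to $\xi(X)$ as needing further bookkeeping. Your caution is warranted: the bare inequality $|\partial_0 X| \leq \xi(X)$ can fail for planar $X$ with few punctures of $S$ (for instance, if $X$ is a sphere with one puncture of $S$ and four boundary curves, then $|\partial_0 X| = 4$ while $\xi(X) = 2$), and in suitable ambient $S$ one can realize all $2^{|\partial_0 X|}$ sub-multicurves as boundaries of valid $Y$. So the paper's one-line route to $2^{\xi(X)}$ is not complete as written, and your Euler-characteristic heuristic would not rescue it either. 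Since the proof of Theorem~\ref{T:main} only uses the uniform bound $2^{\xi(S)}$, which you establish cleanly, your version is adequate for the application.
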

\begin{proof}
An essential subsurface $X$ is a component of the complement of an essential multicurve that we denote $\partial_0 Y$.
If $X$ is a hole for $\mathcal A(Y)$, then observe that $Y$ is the component of the complement of $\partial_0 Y$ containing $X$.  Therefore $Y$ is determined by $X$ and the multicurve $\partial_0 Y \subseteq \partial_0 X$.  There are $2^{|\partial_0 X|}$ submulticurves of $\partial_0 X$, and $|\partial_0X| \leq \xi(X)$, and hence at most this many $Y \subseteq S$ such that $X$ is a hole for $\mathcal A(Y)$.
\end{proof}

\begin{proof}[Proof of Theorem~\ref{T:main}]
Fix $S$.  For every ideal triangulation $T$, we choose a marking $\mu(T)$ so that $i(T,\mu(T))$ is minimized (here we simply take the sum of intersection numbers of components of $T$ and $\mu(T)$).   Because the mapping class group $\Mod(S)$ has only finitely many orbits on $\mathcal F(S)$, this intersection number is uniformly bounded, independent of $T$.  Consequently, there exists  $\delta_0 > 0$ such that for each triangulation $T$ of $S$ and every subsurface $Y \subseteq S$ we have
\begin{equation} \label{eq:mu(T) and T bdd}
d_{\mathcal C(Y)}(\mu(T),T) < \delta_0.
\end{equation}

Furthermore, we claim that $T \mapsto \mu(T)$ is coarsely $\Mod(S)$--equivariant.  More precisely, for every $g \in \Mod(S)$ and $T \in \mathcal F(S)$, we claim that $d_{\mathcal M}(\mu(gT),g\mu(T))$ is uniformly bounded.  This follows from Theorem~\ref{T:mm-dist} since \eqref{eq:mu(T) and T bdd} and the triangle inequality imply
\begin{eqnarray*}
d_{\mathcal C(Y)}(\mu(gT),g\mu(T)) & \leq & d_{\mathcal C(Y)}(\mu(gT),gT) + d_{\mathcal C(Y)}(gT,g\mu(T)) \\
&= & d_{\mathcal C(Y)}(\mu(gT),gT) + d_{\mathcal C(g^{-1}Y)}(T,\mu(T)) \leq 2\delta_0 \end{eqnarray*}
Since $\Mod(S)$ acts cocompactly by isometries on the proper geodesic spaces $\mathcal F(S)$ and $\mathcal M(S)$, the Milnor-$\check{S}$varc Lemma implies $T \mapsto \mu(T)$ is a quasi-isometry.
Thus, for $T_1,T_2 \in \mathcal F(S)$ and $\mu_i = \mu(T_i)$, for $i = 1,2$ we have
\begin{equation}  \label{eq:flip to marking} d_{\mathcal F}(T_1,T_2) \asymp d_{\mathcal M}(\mu_1,\mu_2).\end{equation}
Let $(K_0,C_0)$ be the implicit constants in this coarse equation.

Next, we choose constants $0 < k_1 < k_2 < k_3 < \infty$ large enough so that for all $T_1,T_2 \in \mathcal F(S)$:\\
{\bf (i)} If $X$ is a hole for $\mathcal A(Y)$ and $d_{\mathcal C(X)}(T_1,T_2) \geq k_3$, then $d_{\mathcal A(Y)}(T_1,T_2) \geq k_2$; and \\
{\bf (ii)} if $d_{\mathcal A(Y)}(T_1,T_2) \geq k_2$, then
\[ d_{\mathcal A(Y)}(T_1,T_2) \asymp  \sum_{X \in H(\mathcal A(Y))} [d_{\mathcal C(X)}(T_1,T_2)]_{k_1}\]
where the implicit constants in this coarse equation are $(K_1,0)$.  For {\bf (ii)}, this means that when the arc graph distance is at least $k_2$, the sum with cut-off function $k_1$ is correct with only a multiplicative error.  To see that we can find such $k_1,k_2,k_3$ and $K_1$, we first appeal to Theorem~\ref{T:msch-dist} to find $k_1, k_2, K_1$ so that {\bf (ii)} holds.  This is possible since once the the arc-graph distance is bigger than twice the additive constant, say, then by doubling the multiplicative constant, we may remove the additive error.  Appealing to Theorem~\ref{T:msch-dist} again guarantees that for $k_3$ sufficiently large {\bf (i)} also holds.  For reasons that will become clear later, we will also assume that $k_1 \geq 10 \delta$ and that $k_1-2 \delta_0$ is above the threshold for Theorem~\ref{T:mm-dist} to hold.

For $T_1,T_2 \in \mathcal F(S)$, let $\Omega(T_1,T_2,k_2)$ be the set of subsurfaces $Y \subseteq S$ so that $d_{\mathcal A(Y)}(T_1,T_2) \geq k_2$.  Then we have
\[ \sum_{Y \subseteq S} [d_{\mathcal A(Y)}(T_1,T_2)]_{k_2} = \sum_{Y \in \Omega(T_1,T_2,k_2)} d_{\mathcal A(Y)}(T_1,T_2) \asymp \sum_{Y \in \Omega(T_1,T_2,k_2)} \sum_{X \in H(\mathcal A(Y))} [d_{\mathcal C(X)}(T_1,T_2)]_{k_1}.\]
The implicit constants in the coarse equation are again $(K_0,0)$ by {\bf (ii)}.

Let $\mathcal H = \mathcal H(T_1,T_2,k_1,k_2,k_3)$ be the set of all $X$ which appear with nonzero contribution in the sum on the right-hand side of the above coarse equation.  We note that $\mathcal H$ does not keeping track of how many times such an $X$ appears.  By Lemma~\ref{L:where it's a hole}, any $X \in \mathcal H$ appears at most $2^{\xi(S)}$ times in the sum.  Therefore we have
\begin{equation}
\label{eq:arc graphs to H subsurfaces} \sum_{X \in \mathcal H} d_{\mathcal C(X)}(T_1,T_2)  \asymp \sum_{Y \subseteq S} [d_{\mathcal A(Y)}(T_1,T_2)]_{k_2}.
\end{equation}
Here the implicit constants can be taken to be $(2^{\xi(S)}K_0,0)$.

By definition, for each $X \in \mathcal H$, $d_{\mathcal C(X)}(T_1,T_2) \geq k_1$.  On the other hand, if $d_{\mathcal C(X)}(T_1,T_2) \geq k_3$, then $X \in \mathcal H$.   Thus $\mathcal H$ contains {\em all} subsurfaces with distance at least $k_3$ and {\em some} subsurfaces with distance at least $k_1$.  Since $d_{\mathcal C(X)}(\mu_i,T_i) \leq \delta_0$, it follows that if $X \in \mathcal H$, then $d_{\mathcal C(X)}(\mu_1,\mu_2) \geq k_1 - 2\delta_0$, and if $d_{\mathcal C(X)}(\mu_1,\mu_2) \geq k_3 + 2 \delta_0$, then $X \in \mathcal H$.  By the monotonicity of the constants in Theorem~\ref{T:mm-dist}, we have
\begin{equation} \label{eq:marking to H subsurfaces}
d_{\mathcal M}(\mu_1,\mu_2) \asymp \sum_{X \in \mathcal H} d_{\mathcal C(X)}(\mu_1,\mu_2).
\end{equation}
Here the implicit constants $(K_2,C_2)$ in the coarse equation are the same as those in Theorem~\ref{T:mm-dist} for threshold $k_3+2\delta_0$.  Finally, since $k_1 \geq 10 \delta_0$, we have
\begin{equation} \label{eq:T to mu on H}  \sum_{X \in \mathcal H} d_{\mathcal C(X)}(\mu_1,\mu_2)  \asymp  \sum_{X \in \mathcal H} d_{\mathcal C(X)}(T_1,T_2)\end{equation}
and one can check that the implicit constant is $(\frac{9}{8},0)$ (since each term on the left differs from the corresponding term on the right by an additive error which is small compared to it size).

Setting $k = k_2$, and combining \eqref{eq:flip to marking},  \eqref{eq:marking to H subsurfaces} \eqref{eq:T to mu on H}, and \eqref{eq:arc graphs to H subsurfaces}
\[ d_{\mathcal F}(T_1,T_2) \asymp \sum_{Y \subseteq S} [d_{\mathcal A(Y)}(T_1,T_2)]_{k_2}\]
where the implicit constants in the coarse equation depend on all the above constants.  This completes the proof.
\end{proof}

\bibliographystyle{alpha}
\bibliography{main}
\end{document}